\newcommand{\swing}{\mathbin{\raisebox{2.0pt}
       {\rotatebox{160}{$\curvearrowleft$}}}}
\newcommand{\exswing}{\stackrel{\textup{ ex}}{\swing}}
\newcommand{\inswing}{\stackrel{\textup{\,in}}{\swing}}
\newcommand{\grrel}{\mathbin{\gr}}
\theoremstyle{plain}
\newtheorem{theorem}{Theorem}
\newtheorem*{theoremn}{Theorem}
\newtheorem{lemma}[theorem]{Lemma}
\newtheorem{corollary}[theorem]{Corollary}
\theoremstyle{definition}
\newtheorem{definition}[theorem]{Definition}
\newcommand{\pr}[1]{\tup{(}#1\tup{)}}
\newcommand{\Col}[1]{\tup{col}(#1)}
\newcommand{\col}[1]{\tup{col}(#1)}
\newcommand{\traj}[1]{\tup{traj}{(#1})}
 \newcommand{\seven}{\SfS 7}
\begin{document}
\title[The Swing Lemma and $\E C_1$-diagrams]
{Using the Swing Lemma and $\E C_1$-diagrams 
for congruences of planar semimodular lattices}
\author[G.\ Gr\"atzer]{George Gr\"atzer}
\email{gratzer@me.com}
\urladdr{http://server.maths.umanitoba.ca/homepages/gratzer/}
\address{University of Manitoba}
\date{June 6, 2021}

\begin{abstract} 
A planar semimodular lattice $K$ is \emph{slim} 
if $\SM{3}$ is not a sublattice of~$K$.
In a recent paper, G. Cz\'edli found four new properties  
of congruence lattices of slim, planar, semimodular lattices,
including the \emph{No Child Property}: 
\emph{Let~$\mathcal{P}$ be the ordered set of join-irreducible congruences of $K$.
Let $x,y,z \in  \mathcal{P}$ and let $z$ be a~maximal element of $\mathcal{P}$.
If  $x \neq y$ and $x, y \prec z$ in $\mathcal{P}$, 
then there is no element $u$ of $\mathcal{P}$ such that $u \prec x, y$ in $\mathcal{P}$.}

We are applying my Swing Lemma, 2015,
and a type of standardized diagrams of Cz\'edli's, to verify his four properties.
\end{abstract}

\subjclass[2000]{06C10}

\keywords{Rectangular lattice, slim planar semimodular lattice, congruence lattice}

\maketitle    

\section{Introduction}\label{S:Introduction}

Let $K$ be a planar semimodular lattice. 
We call the lattice $K$ \emph{slim} if $\SM{3}$ is not a~sublattice of~$K$.
In the paper \cite[Theorem 1.5]{gG14a}, I found a property of congruences of  slim, planar, semimodular lattices.
In the same paper (see also Problem 24.1 in G. Gr\"atzer~\cite{CFL2}),
I~proposed the following:

\vspace{6pt}

\tbf{Problem.} Characterize the congruence lattices of slim  planar semimodular lattices.

\vspace{6pt}

G. Cz\'edli ~\cite[Corollaries 3.4, 3.5, Theorem 4.3]{gCa} found four new properties  
of congruence lattices of slim, planar, semimodular lattices.

\begin{theoremn}\label{T:main}
Let $K$ be a slim, planar, semimodular  lattice 
with at least three elements and let~$\E P$ be
the ordered set of join-irreducible congruences of $K$.

\begin{enumeratei}
\item \emph{Partition Property:} 
The set of maximal elements of $\E P$ can be divided into the disjoint union 
of two nonempty subsets such that no two distinct elements 
in the same subset have a common lower cover.\label{E:LC} 
\item \emph{Maximal Cover Property:}  
If $v \in \E P$ is covered by a maximal element $u$ of $\E P$, 
then $u$ is not the only cover of $v$.
\item \emph{No Child Property:} 
Let $x \neq y \in \E P$ and let $u$ be a maximal element of $\E P$. 
If $x,y \prec u$ in $\E P$, 
then there is no element $z \in \E P$ such that $z \prec x, y$ in $\E P$.
\item \emph{Four-Crown Two-pendant Property:}
There is no cover-preserving embedding of the ordered set $\E R$ in Figure~\ref{F:notation}
into $\E P$ satisfying the property\tup{:} any maximal element of~$\E R$
is a maximal element of $\E P$.
\end{enumeratei}
\end{theoremn}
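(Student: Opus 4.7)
The plan is to work inside a standardized Czédli $\mathcal{E}C_1$-diagram of $K$, in which every edge has slope $45^{\circ}$ or $135^{\circ}$, and to identify join-irreducible congruences with equivalence classes of prime intervals under the closure supplied by the Swing Lemma. Under this dictionary the order on $\mathcal{P}$ corresponds to the reflexive--transitive closure of $\swing$ together with down-perspectivity: $\mathrm{con}(\mathfrak{p}) \ge \mathrm{con}(\mathfrak{q})$ iff a finite sequence of internal/external swings and down-perspectivities carries $\mathfrak{p}$ to $\mathfrak{q}$. Maximal elements of $\mathcal{P}$ are then the congruence classes of prime intervals lying on the left or right boundary chain of the rectangular hull of the diagram, so each of the four properties (i)--(iv) becomes a combinatorial statement about swing chains in the diagram.

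First I would dispose of the Partition Property by splitting the maximal elements of $\mathcal{P}$ according to whether a representative boundary prime interval lies on the left boundary chain or on the right boundary chain. If two distinct maximal congruences on the same side shared a common lower cover $v$, then two distinct boundary prime intervals on that side would both have to swing to a single prime interval representing $v$; a short geometric argument in the $\mathcal{E}C_1$-diagram, using that an edge on, say, the left boundary can only swing inward along a uniquely determined covering square, rules this out.

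For the Maximal Cover Property, given $v \prec u$ with $u$ maximal, I choose representative boundary prime intervals $\mathfrak{p}$ for $u$ and $\mathfrak{q}$ for $v$, and trace the Swing Lemma sequence from $\mathfrak{p}$ to $\mathfrak{q}$. The first swing step identifies a specific covering square, and its reflected edge on the opposite side of that square yields another boundary prime interval, not congruent to $\mathfrak{p}$, whose class is still a cover of $v$. The No Child Property proceeds in the same spirit: if $x, y \prec u$ admitted a common lower cover $z$, then two distinct first swing steps from a representative of $u$ would both have to further swing to one common prime interval representing $z$; tracking, in the $\mathcal{E}C_1$-diagram, the type of each swing (internal vs.\ external, left vs.\ right) and the forced rigidity of a boundary starting point produces a contradiction.

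The main obstacle is the Four-Crown Two-pendant Property, because one must simultaneously control six prime intervals and several swing chains among them while pinning the four crown vertices to the boundary. My plan is an organized case split on the assignment of the four crown maximal elements to the two boundary chains (a manageable number of cases remain after reducing by left/right and top/bottom symmetries of the $\mathcal{E}C_1$-diagram) and, within each case, to use the rigidity of the diagram to show that the two required pendant intervals force a geometric collision incompatible with semimodularity and slimness. The bookkeeping here is where I expect the bulk of the work to lie, building on the boundary-swing lemmas already developed for (ii) and (iii).
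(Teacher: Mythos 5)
Your overall strategy --- pass to a slim rectangular lattice, fix a Cz\'edli diagram, read maximal join-irreducible congruences off the boundary, and translate the order on $\E P$ via the Swing Lemma --- is the paper's strategy, but the proposal has a defect at its foundation. You describe the $\E C_1$-diagram as one ``in which every edge has slope $45\degree$ or $135\degree$.'' That is not the definition and it would make the tool useless here: the defining feature is that the middle edge of every covering $\SfS 7$ is \emph{steep} (slope strictly between $45\degree$ and $135\degree$), while all \emph{other} edges are normal. The steep edges drive every one of the four arguments: a cover $\col Z\prec\col X$ with $X$ on the upper-left boundary is realized as $X\perspdn S\exswing T$ with $T$ the steep middle edge of a peak $\SfS 7$ and $Z\in\traj T$; distinct steep edges have disjoint trajectories (an element of an SPS lattice has at most two covers, so a trajectory contains at most one steep edge), which is exactly what forces $T_X=T_Y$ and hence $X=Y$ in the Partition Property; and the No Child Property dies on a slope clash (a normal-up boundary edge cannot be down-perspective to the normal-down upper-right edge of the peak $\SfS 7$ over $Z$). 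With ``all edges normal'' none of this is available, and your substitute mechanism for the Partition Property (``an edge on the left boundary can only swing inward along a uniquely determined covering square'') is not correct: an edge on the upper-left boundary does not swing at all --- its bottom element is meet-irreducible --- and the uniqueness comes from trajectory disjointness of steep edges, not from a covering square. Two further imprecisions: the maximal elements of $\E P$ are the colors of edges on the \emph{upper} boundary (upper-left or upper-right chain), not of the full left or right boundary chains; and your dictionary for $\leq$ omits the initial up-perspectivity in the Swing Lemma, which matters once representatives leave the boundary.

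For the Four-Crown Two-pendant Property you offer only a promise: a case split plus ``a geometric collision incompatible with semimodularity and slimness,'' with no indication of what the collision is. That is where the content lies. The mechanism is again the steep edges: the Partition argument pins $A,C$ to one upper chain and $B,D$ to the other; each covering pair such as $p\prec a,b$ produces a peak $\SfS 7$ whose steep middle edge is colored by the lower element and whose outer edges receive $A$ and $B$ by down-perspectivity; iterating this for $p,q,r,s$ and then for the pendants $u$ and $v$ produces an impossible configuration (e.g., one boundary edge forced to be down-perspective to both outer edges of a single peak $\SfS 7$). Without the steep edges and the trajectory-disjointness lemma, the bookkeeping you defer cannot even be set up, so as it stands the hardest quarter of the theorem has no argument behind it.
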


In this paper, we will provide a short and direct proof of this theorem 
using only the Swing Lemma and $\E C_1$-diagrams, see Section~\ref{S:Tools}.

\begin{figure}[t!]
\centerline
{\includegraphics[scale=1]{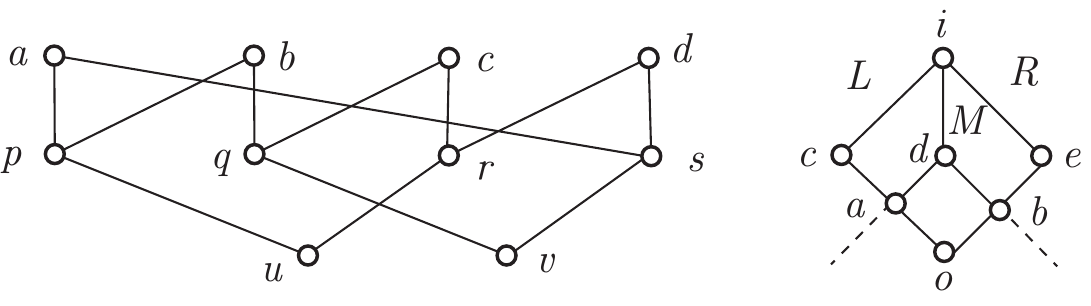}}
\caption{The Four-crown Two-pendant ordered set $\E R$ with notation; 
the covering $\SfS 7$ sublattice with edge and element notation}
\label{F:notation}
\end{figure}

\subsection*{Outline} 
Section~\ref{S:Motivation} provides the motivation for Cz\'edli's Theorem. 
Section~\ref{S:Tools} provides the tools we need: 
the Swing Lemma,  $\E C_1$-diagrams, and forks.
Section~\ref{S:partition} proves the Partition Property, 
Section~\ref{S:Maximal} does the Maximal Cover Property,
while Section~\ref{S:Child} verifies the No Child Property.
Finally,  The Four-Crown Two-pendant Property is proved in Section~\ref{S:Crown}.

\section{Motivation}\label{S:Motivation}
In my paper \cite{GLS98a} with H. Lakser and E.\,T. Schmidt, 
we proved that every finite distributive lattice $D$ can be represented
as the congruence lattice of a semimodular lattice $L$.
To our surprise, the semimodular lattice $K$ we constructed was \emph{planar}.

G.~Gr\"atzer and E.~Knapp~\cite{GKn07}--\cite{GK10} 
started the study of planar semimodular lattices. 
I continued it with my ``Notes on planar semimodular lattices'' series (started with Knapp):  
\cite{gG13}, \cite{GW10} (with T. Wares),  \cite{CG12} (with G. Cz\'edli), 
\cite{gG19}, \cite{gG21b}.
See also  G. Cz\'edli and E.\,T. Schmidt \cite{CS13} and G. Cz\'edli \cite{gC14}--\cite{gCb}.

A major subchapter of  the theory of planar semimodular lattices 
started with the observation that in the construction of the lattice $K$, 
as in the first paragraph of this section, 
$\SM{3}$ sublattices play a crucial role.
It was natural to raise the question
what can be said about congruence lattices of slim, planar, semimodular (SPS) lattices
(see  [CFL2, Problem~24.1], originally raised in G. Gr\"atzer~\cite{gG14a}). 
In~\cite{gG14a}, I~found the first necessary condition and 
G. Cz\'edli \cite{gC14a} proved that this condition is not sufficient 
(see also my related papers \cite{gG15a} and \cite{gG19}).

A number of papers developed tools to tackle this problem:
the Swing Lemma (G. Gr\"atzer~\cite{gG15}), trajectory coloring (G. Cz\'edli \cite{gC14}),
special diagrams (G. Cz\'edli \cite{gC17}), lamps (G. Cz\'edli \cite{gCa}).
Some of these results require long proofs. 
The proof of the trajectory coloring theorem is just shy of 20 pages, 
while the basic theory of lamps and its application to Theorem~\ref{T:main} is 23 pages.

There are a number of  surveys of this field, 
see the book chapters G.~Cz\'edli and G.~Gr\"atzer~\cite{CG14} 
and  G.~Gr\"atzer~\cite{gG13b} 
in G.~Gr\"atzer and F.~Wehrung, eds.\,~\cite{LTS1}.
My~presentation \cite{gG21a} provides a gentle review
for the background of this topic.

\section{The tools we need}\label{S:Tools}

Most basic concepts and notation not defined in this paper 
are available in Part~I of the book \cite{CFL2}, see 

\verb+https://www.researchgate.net/publication/299594715+\\
\indent {\tt arXiv:2104.06539}

\noindent It is available to the reader. 
We will reference it, for instance, as [CFL2, page 52].
In particular, we use the notation $C \persp D$, $C \perspup D$, 
and $C \perspdn D$ for perspectivity,
up-perspectivity, and down-perspectivity, respectively.
As usual, for planar lattices, a prime interval (or covering interval) is called an \emph{edge}.
For a finite lattice $K$ and a~finite ordered set $R$, 
a \emph{cover-preserving} embedding $\ge \colon R \to K$ is an embedding~$\ge$
mapping edges of $R$ to edges of $K$. 
We define a \emph{cover-preserving} sublattice similarly.
For the lattice $\SfS 7$ of Figure~\ref{F:notation}, 
we need a variant: an  $\SfS 7$ sublattice $\SfS{}$ 
(a sublattice isomorphic to  $\SfS 7$) is a  \emph{peak sublattice}
if the three top edges ($L$, $M$, and $R$ in Figure~\ref{F:notation}) are edges in $K$.

By G. Gr\"atzer and E. Knapp \cite{GKn09}, 
every slim, planar, semimodular lattice $K$ has a congruence-preserving
extension (see [CFL2, page 43])  $\hat K$ to a slim rectangular lattice. 
Any of the properties (i)--(iv) holds for $K$ if{}f it holds for $\hat K$. 
Therefore, in~the rest of this paper, we can assume that $K$ is  a slim rectangular lattice,
simplifying the discussion.

\subsection{Swing Lemma}\label{S:Swing}

For an edge $E$ of an SPS lattice $K$, let $E = [0_E, 1_E]$ 
and define  $\Col{E}$,  the \emph{color of}~$E$, as $\con E$,
the (join-irreducible) congruence generated by collapsing $E$ 
(see [CFL2, Section 3.2]).
We write $\E P$ for $\Ji {\Con K}$, 
the ordered set of join-irreducible congruences of $K$.

As in my paper~\cite{gG15},  for the edges $U, V$ 
of an SPS lattice $K$, we define a binary relation:
$U$~\emph{swings} to $V$, in formula, $U \swing V$, if $1_U = 1_V$, 
the element $1_U = 1_V$ of~$K$ covers at least three elements,
and $0_V$ is neither the left-most 
nor the right-most element covered by $1_U = 1_V$; 
if also $0_U$ is such, then the swing is \emph{interior}, 
otherwise, it is \emph{exterior}, denoted by  $U \inswing V$ and $U \exswing V$, respectively.

\begin{named}{Swing Lemma [G. Gr\"atzer~\cite{gG15}]}
Let $K$ be an SPS lattice and let $U$ and $V$ be edges in $K$. 
Then  $\Col V \leq\Col U$ if{}f there exists an edge $R$ 
such that $U$ is up-perspective to $R$ 
and there exists a sequence of edges and a~sequence of binary relations 
\begin{equation*}\label{E:sequence}
   R = R_0 \grrel_1 R_1 \grrel_2 \dots \grrel_n R_n = V,
\end{equation*}
where each relation $\grrel_i$ is $\perspdn$ \pr{down-perspective} or $\swing$ \pr{swing}.
In~addition, this sequence also satisfies 
\begin{equation*}\label{E:geq}
   1_{R_0} \geq 1_{R_1} \geq \dots \geq 1_{R_n}.
\end{equation*}
\end{named}

The following statements are immediate consequences of the Swing Lemma,
see my papers~\cite{gG15} and \cite{gG14e}.

\begin{corollary}\label{C:equal} 
We use the assumptions of the Swing Lemma.
\begin{enumeratei}
\item The equality $\Col U = \Col V$ holds in $\E P$ if{}f  there exist edges $S$ and $T$ in $K$,
such that 
\begin{equation*}\label{E:xx}
    U \perspup S,\ S \inswing T,\ T \perspdn V.
\end{equation*}
\item Let us further assume that the element $0_U$ is meet-irreducible.
Then the equality $\Col U = \Col V$ holds in $\E P$  
if{}f there exists an edge $T$ such that $U \inswing T \perspdn V$.
\item If the lattice $K$ is rectangular and $U$ is on the upper boundary of $K$, 
then the equality $\Col U = \Col V$ holds in $\E P$ if{}f $U \perspdn V$.
\end{enumeratei}
\end{corollary}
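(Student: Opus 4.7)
In each of the three biconditionals, the ``if'' direction is automatic: $\perspup$, $\inswing$, and $\perspdn$ all preserve the color of an edge, so composing them yields $\Col U = \Col V$. The substantive work lies in the ``only if'' direction and centers on part~(i); parts~(ii) and~(iii) then follow as quick structural corollaries.

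For~(i), starting from $\Col U = \Col V$, I would apply the Swing Lemma to $\Col V \leq \Col U$ and obtain an edge $R$ with $U \perspup R$ and a sequence $R = R_0 \grrel_1 R_1 \grrel_2 \cdots \grrel_n R_n = V$ of down-perspectives and swings with weakly decreasing tops $1_{R_i}$. Because down-perspectives preserve color, the equality $\Col U = \Col V$ forces every swing to preserve color as well, which in an SPS lattice means each swing is interior (exterior swings strictly decrease color). Interior swings sharing a common top compose into a single interior swing---they merely reposition the bottom among the non-extremal elements covered by the shared top---and, in a rectangular lattice, consecutive down-perspectives along a common trajectory collapse into one down-perspective. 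Iterating these two compressions normalizes the sequence to the canonical three-step form $U \perspup S \inswing T \perspdn V$, where $S$ sits at the peak of the color trajectory and $T$ is positioned there so that a single down-perspective reaches $V$. Carrying this compression out rigorously---in particular, verifying that one interior-swing step at the peak followed by one down-perspective always suffices---is the technical heart and main obstacle of the argument.

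For~(ii), invoke~(i) to obtain $U \perspup S \inswing T \perspdn V$. The up-perspective gives $1_U \wedge 0_S = 0_U$, and meet-irreducibility of $0_U$, combined with $1_U \neq 0_U$, forces $0_S = 0_U$, so $S = U$ and the chain reduces to $U \inswing T \perspdn V$. For~(iii), when $K$ is rectangular and $U$ lies on its upper boundary, no covering square sits above $U$, so any $U \perspup R$ forces $R = U$; moreover, $0_U$ is an extremal (leftmost or rightmost) element covered by $1_U$, so every swing incident to $U$ is exterior and hence strictly color-decreasing---incompatible with $\Col U = \Col V$. Consequently the Swing Lemma sequence from $U$ uses only down-perspectives, and the trajectory structure of a rectangular lattice compresses such a chain into the single relation $U \perspdn V$.
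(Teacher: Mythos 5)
The paper does not actually prove this corollary: it is asserted as an ``immediate consequence of the Swing Lemma'' with the proofs deferred to \cite{gG15} and \cite{gG14e}. So your proposal cannot be matched against an argument in the text; it has to stand on its own, and as written it does not. Your ``if'' directions are fine (perspectivities preserve color, and an interior swing is symmetric, so it preserves color too), and your reductions of (ii) and (iii) to (i) via meet-irreducibility of $0_U$ are correct. The problems are in the ``only if'' direction of (i). First, you rule exterior swings out of the Swing Lemma sequence by asserting that an exterior swing \emph{strictly} decreases the color. That is true, but it is not immediate from the statement of the Swing Lemma: if $S \exswing T$ and $\col S = \col T$, the reverse inequality $\col S \le \col T$ only gives you \emph{some} sequence from an edge up-perspective to $T$ back to $S$, and excluding that possibility requires a structural argument (it is essentially the strictness hidden in Corollary~\ref{C:cov} and in \eqref{E:3}). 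You use this fact twice --- once in (i) and again in (iii) --- without proof.

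Second, and more seriously, the normalization of the remaining sequence is the entire content of part (i), and you explicitly leave it open. Your two compression rules (merging interior swings that share a top, and composing consecutive down-perspectivities, the latter of which is indeed transitive) only handle consecutive steps of the same kind; they say nothing about a genuinely alternating sequence such as $R_0 \perspdn R_1 \inswing R_2 \perspdn R_3 \inswing R_4 \perspdn V$, where the two interior swings occur at \emph{different} tops $1_{R_1} > 1_{R_3}$. Showing that such a sequence can always be replaced by a single $\perspup$, a single $\inswing$, and a single $\perspdn$ requires an exchange or trajectory argument (this is what \cite{gG14e} supplies), and calling it ``the technical heart and main obstacle'' without supplying it means the proof is not done. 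A minor further point: in (iii), the reason $U \perspup R$ forces $R = U$ is not that ``no covering square sits above $U$'' but that $0_U$ is meet-irreducible for every edge $U$ on the upper boundary of a rectangular lattice (so $0_U = 1_U \wedge 0_R$ forces $0_R = 0_U$); once stated that way, (iii) is just (ii) plus the observation that $0_U$ is extremal below $1_U$, so the swing in (ii) must be degenerate.
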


Note that in (i) the edges $S, T, U, V$ need not be distinct, 
so we have as special cases $U = V$, $U \persp V$, $S = T$, and others.

\begin{corollary}\label{C:cov} 
We use the assumptions of the Swing Lemma.
\begin{enumeratei}
\item The covering $\Col V \prec \Col U$ holds in $\E P$  
if{}f there exist edges $R_1, \dots, R_4$ in~$K$, such that 
\begin{equation*}
   U \perspup R_1,\  R_1 \inswing R_2,\  R_2 \perspdn R_3,\ 
      R_3 \exswing R_4,\  R_4 \perspdn V.
\end{equation*}
\item If the element $0_U$ is meet-irreducible,
then the covering $\Col V \prec \Col U$ holds in $\E P$   
if{}f there exist edges $S, T$ in $K$, so that 
\begin{equation*}
   U  \perspdn S \exswing T \perspdn V.
\end{equation*}
\end{enumeratei}
\end{corollary}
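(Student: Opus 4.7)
The plan is to exploit the Swing Lemma together with Corollary~\ref{C:equal}, observing that each step of a Swing-Lemma chain has a controlled effect on color in $\E P$: a down-perspectivity or an interior swing preserves color, while an exterior swing $X \exswing Y$ produces a cover $\Col Y \prec \Col X$ in $\E P$. The last fact follows from the peak $\SfS 7$-sublattice witnessing the swing ($X$ is a side top edge, $Y$ the middle top edge), where the color of the middle edge is covered by the common color of the two side edges.

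For part (i), the sufficiency is a direct walk: along the chain $U \perspup R_1 \inswing R_2 \perspdn R_3 \exswing R_4 \perspdn V$ only the exterior swing changes color, and it does so by exactly one cover, so $\Col V \prec \Col U$. For necessity, suppose $\Col V \prec \Col U$ and apply the Swing Lemma: there is an edge $R$ with $U \perspup R$ and a chain $R = R_0, R_1, \dots, R_n = V$ whose consecutive relations are down-perspectivities or swings. Because only a single cover separates $\Col U$ from $\Col V$, exactly one of these relations is an exterior swing, say $R_{k-1} \exswing R_k$. The prefix from $U$ through $R_{k-1}$ is color-preserving, so $\Col U = \Col R_{k-1}$, and Corollary~\ref{C:equal}(i) rewrites it as $U \perspup R_1 \inswing R_2 \perspdn R_3$ with $R_3 := R_{k-1}$. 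Set $R_4 := R_k$; the suffix from $R_4$ to $V$ is color-preserving, witnessing $\Col R_4 = \Col V$.

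The main obstacle is the final reduction: showing the color-preserving suffix collapses to a single $R_4 \perspdn V$, rather than the canonical form $R_4 \perspup S' \inswing T' \perspdn V$ offered by Corollary~\ref{C:equal}(i). I expect this to exploit that $R_4$ sits as the middle top edge of the peak $\SfS 7$ witnessing $R_3 \exswing R_4$: this position constrains which edges are up-perspective to $R_4$ and which interior swings are available from it, so that any such intermediate step can be absorbed into a single down-perspectivity. For part (ii), the hypothesis that $0_U$ is meet-irreducible upgrades Corollary~\ref{C:equal}(i) to Corollary~\ref{C:equal}(ii), which omits the initial up-perspectivity. Applying this strengthened form to the prefix obtained in (i) collapses $U \perspup R_1 \inswing R_2 \perspdn R_3$ to a single $U \perspdn S$ with $S := R_3$, and combining with the unchanged suffix $R_4 \perspdn V$ (setting $T := R_4$) yields the claimed $U \perspdn S \exswing T \perspdn V$.
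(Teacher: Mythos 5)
The first thing to note is that the paper contains no proof of Corollary~\ref{C:cov}: it is listed among the ``immediate consequences of the Swing Lemma'' and the actual arguments are deferred to \cite{gG15} and \cite{gG14e}. So your attempt must stand on its own, and while its skeleton (classify the steps of a Swing-Lemma chain by their effect on color and count the exterior swings) is the right idea, the load-bearing steps are asserted rather than proved. The central gap is your opening claim that an exterior swing $X \exswing Y$ produces a \emph{cover} $\Col Y \prec \Col X$ in $\E P$. The Swing Lemma only gives $\Col Y \leq \Col X$; that the inequality is strict, and above all that no join-irreducible congruence lies strictly between, is exactly the sufficiency direction of part (i) in the special case $U = R_3 = X$, $R_4 = V = Y$. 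Your justification --- that ``the color of the middle edge is covered by the common color of the two side edges'' of the witnessing peak $\SfS 7$ --- merely restates this special case, so the argument is circular (and it misstates the configuration: the two side edges of a covering $\SfS 7$ have \emph{different} colors, cf.~\eqref{E:1}). One would have to show that any $z$ with $\Col Y < z < \Col X$ forces every Swing-Lemma chain from $X$ to $Y$ to contain at least two exterior swings and that this is impossible; nothing in your write-up supplies this. Your ``exactly one exterior swing'' step in the necessity direction rests on the same unproved fact.

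There are two further gaps. You explicitly leave open the reduction of the color-preserving suffix to a single $R_4 \perspdn V$ (``I expect this to exploit\dots''); that is precisely where the structure theory of trajectories and covering $\SfS 7$ sublattices is needed, since Corollary~\ref{C:equal}(i) a priori only yields $R_4 \perspup S' \inswing T' \perspdn V$, and an appeal to expectation is not a proof. In part (ii) the reduction of the prefix is incorrect as stated: Corollary~\ref{C:equal}(ii) yields $U \inswing T'' \perspdn R_3$, not $U \perspdn S$. Meet-irreducibility of $0_U$ does eliminate the initial up-perspectivity (from $0_U = 1_U \wedge 0_{R_1}$ one gets $R_1 = U$), but it does not prevent $1_U$ from covering three or more elements, so the interior swing does not disappear and the claimed form $U \perspdn S \exswing T \perspdn V$ does not follow from the reduction you describe. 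In short, the outline is plausible, but the three decisive steps --- exterior swings yield coverings, the suffix collapses to one down-perspectivity, the prefix in (ii) collapses to one down-perspectivity --- are each missing, and the first is argued circularly.
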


\begin{corollary}\label{C:covnew}
Let $K$ be a slim rectangular lattice, let $U$ and $V$ be edges in $K$,
and let  $U$ be in the upper-left boundary of $K$.
\begin{enumeratei}
\item The covering $\Col V \prec \Col U$ holds in $\E P$
if{}f there exist edges $S, T$ in $K$, such that 
\begin{equation}\label{E:seq5}
   U  \perspdn S \exswing T \perspdn V.
\end{equation}
\item Define the element $t = 1_S = 1_T \in K$
and let $S = E_1, E_2, \dots, E_n = W$ enumerate, from left to right, 
all the edges $E$ of $K$  with $1_E = t$. 
Then 
\begin{align}
   \col{S} &\neq  \col{W},\label{E:1}\\
 \col{E_2} = \cdots &=  \col{E_{n-1}} =  \col{T},\label{E:2}\\
  \col{T} &\prec  \col{S}, \col{W}.\label{E:3}
\end{align}
\end{enumeratei}
\end{corollary}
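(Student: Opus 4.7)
For part (i), I would verify that $0_U$ is meet-irreducible and then invoke Corollary~\ref{C:cov}(ii). Since $U$ lies on the upper-left boundary of the slim rectangular lattice $K$, both endpoints $0_U$ and $1_U$ sit on the boundary chain from the upper-left corner up to $1$; hence $0_U$ has $1_U$ as its unique upper cover and is therefore meet-irreducible. Equation~\eqref{E:seq5} is then precisely what Corollary~\ref{C:cov}(ii) asserts in this setting.

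In part (ii), I would first dispatch~\eqref{E:2} and the $\col T \prec \col S$ half of~\eqref{E:3}. For~\eqref{E:2}, fix $2 \leq i \leq n-1$; both $0_{E_i}$ and $0_T$ are strictly between the leftmost lower cover $0_S$ and the rightmost lower cover $0_W$ of $t$, so the swings $E_i \inswing T$ and $T \inswing E_i$ are both interior. Two one-step applications of the Swing Lemma (taking $R$ equal to the source edge each time) yield $\col T \leq \col{E_i}$ and $\col{E_i} \leq \col T$, whence $\col{E_i} = \col T$. For the $\col T \prec \col S$ half of~\eqref{E:3}, perspectivity preserves color, so $\col S = \col U$ and $\col T = \col V$, and part~(i) supplies $\col V \prec \col U$.

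The main obstacle is the right-hand mirror, namely establishing $\col T \prec \col W$ in~\eqref{E:3} and $\col S \neq \col W$ in~\eqref{E:1}; the hypothesis places $U$ on the upper-left boundary but says nothing about the position of $W$. My plan is to trade $W$ for a suitable edge $U'$ on the upper-right boundary, exploiting the left--right symmetry of the rectangular lattice. Since $W = E_n$ is the rightmost edge above $t$, an upward tracing through perspectivities in the top portion of $K$ produces an edge $U'$ on the upper-right boundary with $U' \perspdn W$. Then $0_{U'}$ is meet-irreducible for the same boundary reason used in~(i); the chain $U' \perspdn W \exswing T \perspdn V$ is at hand, and a second application of Corollary~\ref{C:cov}(ii) gives $\col V \prec \col{U'}$, which together with $\col{U'} = \col W$ and $\col V = \col T$ yields the desired $\col T \prec \col W$.

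For~\eqref{E:1}, I would argue by contradiction. If $\col S = \col W$, then $\col U = \col{U'}$ with $U$ on the upper-left and $U'$ on the upper-right boundary. Applying Corollary~\ref{C:equal}(iii) to $U$ forces $U \perspdn U'$, and applying it to $U'$ forces $U' \perspdn U$; in a rectangular lattice these two single-step down-perspectivities between boundary edges on opposite top sides are incompatible, producing the contradiction. Carrying out this last incompatibility check cleanly, without leaning on the heavier trajectory-coloring apparatus that Section~\ref{S:Tools} is designed to bypass, is what I expect to be the trickiest step of the proof.
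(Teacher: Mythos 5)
Your part (i) and the easy pieces of part (ii) are essentially the intended derivation (the paper states this corollary without proof, as an immediate consequence of the Swing Lemma and Corollaries~\ref{C:equal} and~\ref{C:cov}): reducing (i) to Corollary~\ref{C:cov}(ii) via the meet-irreducibility of $0_U$ for $U$ on the upper-left boundary of a rectangular lattice, obtaining \eqref{E:2} from two interior swings, and obtaining $\col T\prec\col S$ from (i) together with the fact that perspective edges have the same color, are all correct.

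The gap is in your treatment of $W$. You assert that an ``upward tracing through perspectivities'' produces an edge $U'$ on the upper-right boundary with $U'\perspdn W$. The mirror image of Lemma~\ref{L:application} only guarantees that the normal-down edge $W$ is up-perspective to an edge on the upper-right boundary \emph{or to a steep edge}, and the second alternative genuinely occurs: if $t$ is the leftmost lower cover of an element $t''$ that itself covers at least three elements (for instance, add a fork to a $4$-cell $D_1$ of a grid and then a fork to the $4$-cell whose lower-left edge is the rightmost edge under $1_{D_1}$), then the $4$-cell whose lower-left edge is $W$ has as its upper-right edge the steep middle edge of the covering $\SfS 7$ at $t''$, so $\traj W$ tops out at a steep edge and never reaches the upper-right boundary. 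Hence $U'$ need not exist, and both your proof of $\col T\prec\col W$ and your proof of \eqref{E:1} collapse. The intended arguments avoid $U'$ entirely and are shorter. For $\col T\prec\col W$: since $0_W$ is the rightmost and $0_T$ an interior lower cover of $t$, the relation $W\exswing T$ holds by definition, and Corollary~\ref{C:cov}(i) — with the up-perspectivity, the interior swing, and the first down-perspectivity taken degenerate, as the remark after Corollary~\ref{C:equal} permits — yields the covering directly. For \eqref{E:1}: if $\col S=\col W$, then $\col U=\col W$, so $U\perspdn W$ by Corollary~\ref{C:equal}(iii); but a down-perspectivity $U\perspdn E$ onto an edge with $1_E=t$ forces $0_E=0_U\wedge t$, so $U\perspdn S$ and $U\perspdn W$ with $1_S=1_W=t$ would give $S=W$, a contradiction.
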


\begin{corollary}\label{C:max}
Let the edge $U$ be on the upper edge of the rectangular lattice $K$. 
Then $\Col U$ is a maximal element of $\E P$.
\end{corollary}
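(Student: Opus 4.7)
The plan is to argue by contrapositive: assuming $\Col U \leq \Col W$ for some edge $W$ of $K$, I will show $\Col U = \Col W$, which is enough to conclude $\Col U$ is maximal in $\E P$. By left--right symmetry we may assume $U$ lies on the upper-left boundary of $K$, so that $0_U$ is the \emph{leftmost} lower cover of $1_U$.

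By the Swing Lemma, there exist an edge $R$ with $W \perspup R$ and a sequence $R = R_0 \grrel_1 R_1 \grrel_2 \cdots \grrel_n R_n = U$ in which each $\grrel_i$ is a down-perspectivity or a swing, with non-increasing top endpoints $1_R \geq 1_{R_1} \geq \cdots \geq 1_U$. Up- and down-perspectivities and interior swings each preserve the color of an edge, while an exterior swing strictly lowers it (by Corollary~\ref{C:cov}). So the task reduces to showing that no exterior swing appears in this sequence.

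Suppose, for contradiction, that some $\grrel_k$ is an exterior swing, and choose $k$ to be the largest such index. Then each of $\grrel_{k+1}, \ldots, \grrel_n$ preserves color, giving $\Col R_k = \Col U$. Corollary~\ref{C:equal}(iii), applied to the upper-boundary edge $U$, forces $U \perspdn R_k$, whence $1_{R_k} \leq 1_U$. But the non-increasing heights in the Swing Lemma also give $1_{R_k} \geq 1_U$, so $1_{R_k} = 1_U$; a down-perspectivity between two edges with the same top is trivial, so $R_k = U$. Then $R_{k-1} \exswing U$, which by definition of exterior swing requires $0_U$ to be an interior lower cover of $1_U$, contradicting the fact that $0_U$ is the leftmost lower cover of $1_U$.

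Hence the sequence contains no exterior swing, every step preserves color, and $\Col R = \Col U$. Combined with $\Col W = \Col R$ from $W \perspup R$, we obtain $\Col W = \Col U$, completing the proof of maximality. The main subtle point is coupling the height condition in the Swing Lemma with Corollary~\ref{C:equal}(iii) through the ``last exterior swing'' choice: this is exactly what forces $R_k = U$ and makes the exterior swing incompatible with $U$ lying on the upper boundary.
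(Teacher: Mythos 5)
Your proof is correct and follows the route the paper intends: Corollary~\ref{C:max} is stated there without an explicit proof, as an ``immediate consequence'' of the Swing Lemma, and your argument---isolating the last exterior swing, using Corollary~\ref{C:equal}(iii) together with the monotone heights $1_{R_0}\geq\cdots\geq 1_{R_n}$ to force that swing to land on $U$ itself, and then contradicting the extremality of $0_U$ among the lower covers of $1_U$---is exactly the derivation being left to the reader. The only point worth making explicit is the standard fact that every join-irreducible congruence of the finite lattice $K$ is the color of some edge, which is what justifies reducing maximality in $\E P$ to the comparison $\Col U \leq \Col W$ of edge colors.
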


The converse of this statement is stated in Corollary~\ref{C:max1}.

\subsection{$\E C_1$-diagrams}\label{S:diagrams}

In the diagram of a planar lattice $K$,
a \emph{normal edge} (\emph{line}) has a~slope of $45\degree$ or $135\degree$.
If it is the first, we call it a~\emph{normal-up edge} (\emph{line}), 
otherwise, a  \emph{normal-down edge} (\emph{line}).
Any edge of slope strictly between $45\degree$ and $135\degree$ is \emph{steep}.

\begin{definition}\label{D:well}
A diagram of an rectangular lattice $K$ is a
\emph{$\E C_1$-diagram} if the middle edge of any covering $\SfS 7$ is steep
and all other edges are normal.
\end{definition}

This concept was introduced in G.~Cz\'edli~\cite[Definition 5.3(B)] {gC17},
see also G.~Cz\'edli \cite[Definition 2.1]{gCa}
and G. Cz\'edli and G.~Gr\"atzer~\cite[Definition 3.1]{CG21}.
The following is the existence theorem of $\E C_1$-diagrams 
in G. Cz\'edli \cite[Theorem 5.5]{gC17}.

\begin{theorem}\label{T:well}
Every rectangular lattice lattice $K$ has a $\E C_1$-diagram.
\end{theorem}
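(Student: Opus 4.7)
The natural plan is to prove Theorem~\ref{T:well} by induction on the structure of slim rectangular lattices. By the Cz\'edli--Schmidt structure theorem, every slim rectangular lattice $K$ can be obtained from a grid $C_n \times C_m$ (with $n, m \geq 2$) by a finite sequence of \emph{fork insertions}, where each fork insertion takes a 4-cell of the current lattice and replaces it with a copy of $\SfS 7$ having the original four corners plus three new interior elements. This matches the tool singled out in the outline for Section~\ref{S:Tools}.

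The base case is the grid $C_n \times C_m$ itself, drawn in the standard way with edges of alternating slopes $45\degree$ and $135\degree$. Every edge is normal and there are no covering $\SfS 7$ sublattices, so the condition of Definition~\ref{D:well} is vacuously satisfied. For the inductive step, I would assume that the lattice $K'$ obtained after some number of fork insertions already admits a $\E C_1$-diagram, and let $K$ be obtained from $K'$ by one further fork insertion into a 4-cell $C$ of $K'$. The four boundary edges of $C$ are normal by the inductive hypothesis. I would redraw the interior of $C$ so that the new middle edge runs steeply from the top vertex of $C$ down to a new interior vertex, while the two new side edges fit inside $C$ at normal slopes.

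The main obstacle is bookkeeping. I must check that \textbf{(a)} the new covering $\SfS 7$ produced inside $C$ is the only new covering $\SfS 7$ created by this step, and its middle edge is precisely the steep edge I drew; \textbf{(b)} every covering $\SfS 7$ already present in $K'$ survives in $K$ and keeps its steep middle edge, since the insertion is local to~$C$; and \textbf{(c)} the surrounding cells can be kept, or adjusted only within the normal slopes, so that the whole diagram remains planar. Point (a) is a local analysis of which triples of elements in the modified region form the middle chain of a covering $\SfS 7$; point (b) is essentially automatic because fork insertion changes only the interior of $C$; and point (c) is the geometric heart of the argument, since the forced shrinkage of $C$ and the insertion of a steep edge should not propagate into a cascade of slope changes in distant cells. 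Once these three points are established, the induction completes the proof.
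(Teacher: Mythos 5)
There is a genuine gap, and also a mismatch with how the paper treats this statement: the paper does not prove Theorem~\ref{T:well} at all. It quotes it as Cz\'edli's existence theorem \cite[Theorem 5.5]{gC17} and points to \cite{gG21b} for a short direct proof. Your overall strategy --- induction along the Cz\'edli--Schmidt representation of a slim rectangular lattice as a grid followed by a sequence of fork insertions \cite{CS13} --- is indeed the strategy of that short proof, so the skeleton is right. (Note also that your argument only covers \emph{slim} rectangular lattices, while the statement as printed omits ``slim''; that is the intended scope in this paper, but it should be said.)

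The substantive problem is your description of fork insertion as an operation confined to the interior of the $4$-cell $C$, adding ``three new interior elements.'' That is only correct when $C$ sits on the lower boundary. In general, inserting a fork at $C$ subdivides the two \emph{lower} edges of $C$ (the new elements $d = a \wedge b$ and $e = b \wedge c$ of the new $\SfS 7$ lie on those edges), which destroys the $4$-cells sharing those edges; repairing them forces two descending ``legs'' of new elements that propagate down-left and down-right through a chain of $4$-cells all the way to the lower boundary of the lattice. Consequently your point \textbf{(b)} is not ``essentially automatic'': old edges are subdivided and old $4$-cells are reshaped well outside $C$, and one must check that no old covering $\SfS 7$ loses its steep middle edge and no new one is created along the legs. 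More importantly, your point \textbf{(c)} --- which you yourself identify as ``the geometric heart of the argument'' --- is exactly the content of the theorem and is not supplied: you must give an explicit geometric recipe (placement of the new peak element so that its upper edge is steep, normal slopes for the two legs, and a global re-coordination that keeps every previously normal edge normal and every previously steep edge steep while preserving planarity). Until that recipe is written down and verified, the induction does not close, and the proof is an outline rather than a proof.
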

See the illustrations in this paper for examples of $\E C_1$-diagrams.
For a short and direct proof for the existence of $\E C_1$-diagrams, 
see my paper~\cite{gG21b}.

\emph{In this paper, $K$ denotes a slim rectangular lattice with a fixed $\E C_1$-diagram
and~$\E P$ is the ordered set of join-irreducible congruences of $K$}.

Let $C$ and $D$ be maximal chains in an interval $[a,b]$ of $K$ 
such that $C \ii D = \set{a,b}$.
If there is no element of~$K$ between $C$ and $D$, 
then we call $C \uu D$ a~\emph{cell}. 
A~four-element cell is a \text{\emph{$4$-cell}}. 
Opposite edges of a $4$-cell are called \emph{adjacent}.
Planar semimodular lattices are $4$-cell lattices, 
that is, all of its cells are $4$-cells,
see G.~Gr\"atzer and E. Knapp \cite[Lemmas 4, 5]{GKn07} 
and  [CFL2,~Section 4.1] for more detail.

The following statement illustrates the use of $\E C_1$-diagrams.

\begin{lemma}\label{L:application}
Let $K$ be a slim rectangular lattice $K$ with a fixed $\E C_1$-diagram
and let~$X$ be a normal-up edge of $K$. 
Then $X$ is up-perspective either to an edge in the upper-left boundary of $K$
or to a steep  edge.
\end{lemma}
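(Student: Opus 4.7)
The plan is to iteratively push $X$ upward along up-perspectivities, keeping the moving edge normal-up, and to terminate in one of the two desired configurations. Finiteness of $K$ together with the strict rise of the top endpoint at every step make termination automatic, and transitivity of $\perspup$ guarantees that the terminal edge is up-perspective from $X$.

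\emph{One step of the iteration.} Write the current edge as $[a, b]$, normal-up, so that $b$ sits to the upper right of $a$. If $b$ is the left-most upper cover of $a$ in the planar order, stop. Otherwise let $c$ be the upper cover of $a$ immediately to the left of $b$. From $a \prec b$ and $a \prec c$ in the SPS lattice $K$ one gets $b \wedge c = a$ and, by semimodularity, $b, c \prec b \vee c$; hence $[a, b] \perspup [c, b \vee c]$ by the definition of up-perspectivity. By the $\mathcal{C}_1$-property, the edge $[a, c]$ is either normal-down or steep. If it is normal-down, then $\{a, b, c, b \vee c\}$ is a $4$-cell of the diagram (because $K$ is a $4$-cell lattice), so $[c, b \vee c]$ is the opposite edge of $[a, b]$ and hence parallel to it, again normal-up; continue the iteration with this new edge. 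If $[a, c]$ is steep, then $c$ is the top of a covering $\SfS 7$ with $a$ as its middle lower element; a short planar and semimodularity check shows that $[c, b \vee c]$ is again either normal-up (continue) or steep (stop, with $[c, b \vee c]$ the desired steep edge up-perspective from $X$).

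\emph{Termination cases.} The top endpoint strictly rises at each step, so the iteration ends after finitely many steps. At termination either (A) the rule ``$b$ is the left-most upper cover of $a$'' triggered, or (B) a steep edge was just produced. In case (A), the current edge lies on the upper-left boundary of $K$, because in a slim rectangular lattice drawn as a $\mathcal{C}_1$-diagram the upper-left boundary is exactly the collection of normal-up edges whose lower endpoint has no upper cover strictly to its upper left. In case (B), we already have $X \perspup$ a steep edge. In either case the lemma follows.

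\emph{Main obstacle.} The delicate points are (i) ruling out that, when $[a, c]$ is steep, the next edge $[c, b \vee c]$ could be normal-down (so the iteration stays within the normal-up / steep dichotomy), and (ii) the geometric characterization of the upper-left boundary used in case (A). Both follow from the normalized layout of a $\mathcal{C}_1$-diagram together with slimness and semimodularity, but each requires careful inspection of the planar picture at the relevant local configuration.
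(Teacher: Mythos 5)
Your argument is essentially the paper's own proof: starting from a normal-up edge that is neither steep nor on the upper-left boundary, pass up-perspectively to the upper-left edge of the $4$-cell having the current edge as its lower-right edge, and iterate until a steep edge or the upper-left boundary is reached; the paper relies on (and likewise does not verify) the same two geometric facts about $\E C_1$-diagrams that you flag as your points (i) and (ii). One harmless slip: in your ``$[a,c]$ normal-down'' subcase the opposite edge $[c, b\vee c]$ need not be parallel to $[a,b]$ --- it can still be steep, since steepness is governed by the lower covers of $b\vee c$, not by $[a,c]$ --- but in that event you simply stop, exactly as in your other subcase, so the iteration is unaffected.
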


\begin{proof}
If $X$ is not steep nor it is in the upper-left boundary of $K$, 
then there is a~$4$-cell $C$ whose lower-right edge is $X$.
If the upper-left edge is steep or it is in the upper-left boundary, then we are done. 
Otherwise, we proceed the same way until we reach  a~steep  edge 
or an edge the upper-left boundary.
\end{proof}

\begin{corollary}\label{C:max1}
Let the edge $U$ be on the upper edge of $K$. 
Then $\Col U$ is a maximal element of $\E P$.
Conversely, if $u$ is a maximal element of $\E P$,
then there is an edge $U$ on the upper edge of $K$ so that $\Col U = u$.
\end{corollary}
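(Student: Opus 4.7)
The first half is Corollary~\ref{C:max}, so the content lies in the converse. Given a maximal element $u$ of $\E P$, I pick an edge $V$ of $K$ with $\col V = u$; this is possible because every join-irreducible congruence of $K$ is $\con E$ for some edge $E$. My plan is to produce an edge $U$ on the upper boundary of $K$ with $V \perspup U$; since up-perspective edges share the same color, this immediately gives $\col U = u$.

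The tool for pushing $V$ up is Lemma~\ref{L:application} together with its left-right mirror, valid because slim rectangular lattices and their $\E C_1$-diagrams are preserved under reflection. These lemmas apply only to normal edges, so the first real step is to rule out $V$ being steep. If $V$ were steep, then Definition~\ref{D:well} would place $V$ as the middle edge of a covering $\SfS 7$ sublattice with top $t = 1_V$; writing $L$ for the leftmost edge of $K$ ending at $t$, we have $L \exswing V$, so the Swing Lemma yields $\col V \leq \col L$. Corollary~\ref{C:covnew}(ii), applied to the left-to-right enumeration of edges of $K$ ending at $t$, upgrades this to $\col V < \col L$, contradicting the maximality of $u$.

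Once $V$ is known to be normal, the argument finishes cleanly. When $V$ is normal-up, Lemma~\ref{L:application} produces an edge $U$ with $V \perspup U$ which is either in the upper-left boundary of $K$ or steep; when $V$ is normal-down, the mirror version supplies such a $U$ in the upper-right boundary or steep. The steep alternative is again excluded by applying the first step to $U$ itself (since $\col U = \col V = u$ is maximal), so $U$ lies on the upper edge of $K$ and $\col U = u$, as needed.

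The principal obstacle is the strict inequality $\col V < \col L$ in the steep case. Corollary~\ref{C:cov}(ii) would yield it directly but requires $0_L$ to be meet-irreducible, a hypothesis that may fail in $K$; Corollary~\ref{C:covnew}(ii) avoids that hypothesis, but one still has to arrange a suitable upper-left-boundary edge mapping down-perspectively onto $L$, presumably by iterating Lemma~\ref{L:application} starting from $L$. This is the only nontrivial ingredient beyond the otherwise mechanical push-up argument.
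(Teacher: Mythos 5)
Your plan is the one the paper intends: Corollary~\ref{C:max1} is stated without a separate proof because it is meant to follow from Corollary~\ref{C:max} (the first half) together with Lemma~\ref{L:application} (push a normal edge of color $u$ up to the boundary, and rule out the steep alternative by maximality). Your reduction to the single claim ``a steep edge cannot have maximal color,'' and your use of left--right symmetry for normal-down edges, match that argument exactly.

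The one genuine gap is the strictness of $\col V < \col L$ in the steep case, and you have located it correctly but not closed it. The swing $L \exswing V$ does give $\col V \le \col L$, but your route to strictness through Corollary~\ref{C:covnew}(ii) needs an edge of the upper-left boundary that is down-perspective onto $L$, and iterating Lemma~\ref{L:application} from $L$ need not supply one: the lemma may return a steep edge, in which case the trajectory of $L$ tops out at that steep edge rather than at the boundary and the hypothesis of Corollary~\ref{C:covnew} is simply unavailable (this already occurs for nested forks, e.g.\ in configurations of type $\SfS 7^{(2)}$). A clean way to finish is Corollary~\ref{C:equal}(i): if $\col L = \col V$, there are edges $S,T$ with $L \perspup S \inswing T \perspdn V$; since the steep edge $V$ is the top edge of its trajectory (Section~\ref{S:Trajectories}, as used for Lemma~\ref{L:disj}), the relation $T \perspdn V$ forces $T=V$; then $S \inswing V$ gives $1_S = 1_V = 1_L$, and an up-perspectivity $L \perspup S$ between edges with the same top forces $S=L$, which is impossible because $0_L$ is the leftmost cover of $1_V$ while an interior swing requires $0_S$ to be interior. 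Hence $\col V < \col L$ and $\col V$ is not maximal. (Alternatively, once any upper-boundary edge $U$ with $\col U \ge \col V$ is found, maximality gives $\col U = \col V$, and Corollary~\ref{C:equal}(iii) forces $U \perspdn V$, again impossible for a steep $V$.)
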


\subsection{Trajectories}\label{S:Trajectories}
G. Cz\'edli and E.\,T. Schmidt \cite{CS11} introduced a \emph{trajectory} in $K$ 
as a maximal sequence of consecutive edges, see also [CFL2, Section~4.1]. 
The \emph{top edge}~$T$ of a trajectory 
is either in the upper boundary of $K$ or it is steep by Lemma~\ref{L:application}. 
For such an edge~$T$, we denote by $\traj T$ the trajectory with top edge~$T$.

By G.~Gr\"atzer and E. Knapp \cite[Lemma 8]{GKn07}, 
an element $a$ in an SPS lattice~$K$ has at most two covers.
Therefore, a trajectory has at most one top edge and at most one steep edge.
So we conclude the following statement.

\begin{lemma}\label{L:disj}
Let $K$ be a slim rectangular lattice $K$ with a fixed $\E C_1$-diagram.
Let $X$ and $Y$ be distinct steep edges of $K$. 
Then $\traj X$ and $\traj Y$ are disjoint.
\end{lemma}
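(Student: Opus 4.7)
The plan is to deduce the lemma directly from the two observations highlighted just before its statement: that a trajectory has a unique top edge and contains at most one steep edge, both of which the excerpt derives from the Gr\"atzer--Knapp bound that every element of an SPS lattice has at most two covers.

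First I would argue that the trajectories of $K$ partition its edge set, and in particular that each edge determines a unique trajectory. This is built into the definition: a trajectory is a \emph{maximal} sequence of consecutive edges, so if an edge $E$ belonged to two distinct trajectories $\tau_1$ and $\tau_2$, their union would be a longer sequence of consecutive edges through $E$, contradicting the maximality of either. (The at-most-two-covers property of Gr\"atzer--Knapp also rules out branching, so ``maximal sequence'' is unambiguous.) Consequently, any two trajectories either coincide or are disjoint.

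Now suppose, for contradiction, that $\traj X$ and $\traj Y$ share an edge. By the previous paragraph, $\traj X = \traj Y$; call this common trajectory $\tau$. Then $\tau$ contains the two distinct steep edges $X$ and $Y$, contradicting the fact (stated in the paragraph preceding the lemma) that a trajectory has at most one steep edge. Hence $\traj X$ and $\traj Y$ must be disjoint, as claimed.

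There is no real obstacle here; the lemma is a bookkeeping corollary of the discussion immediately preceding it. The only point deserving explicit attention is the uniqueness of the trajectory containing a given edge, which the maximality clause in the definition of a trajectory, together with the Gr\"atzer--Knapp cover bound, handles at once.
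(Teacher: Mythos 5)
Your argument is correct and is essentially the paper's own: the paper derives the lemma directly from the remarks immediately preceding it (each element has at most two covers, hence a trajectory has at most one steep edge, and distinct trajectories are disjoint by maximality), offering no further proof. Your write-up simply makes those two steps explicit.
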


\section{The Partition Property}\label{S:partition}

First, we verify the Partition Property for the slim rectangular lattice $K$ 
and with a fixed $\E C_1$-diagram.
We start with a lemma.

\begin{lemma}\label{L:disjoint}
Let $X $ and $Y$ be distinct edges on the upper-left boundary of $K$. 
Then there is no edge $Z$ of $K$ such that $\col Z \prec \col X, \col Y$.
\end{lemma}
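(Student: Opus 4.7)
The plan is to argue by contradiction: suppose there is an edge $Z$ with $\col Z \prec \col X$ and $\col Z \prec \col Y$. Since $X$ and $Y$ both lie on the upper-left boundary, Corollary~\ref{C:covnew}(i) supplies edges $S_X, T_X, S_Y, T_Y$ of $K$ with
\begin{equation*}
X \perspdn S_X \exswing T_X \perspdn Z \quad\text{and}\quad Y \perspdn S_Y \exswing T_Y \perspdn Z.
\end{equation*}

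The first step is to collapse $T_X$ and $T_Y$ into a single edge. Because $S_X \exswing T_X$ is an exterior swing, $0_{T_X}$ is an interior cover of $t_X := 1_{T_X}$; in a $\E C_1$-diagram this makes $T_X$ the middle edge of some covering $\SfS 7$, and hence steep, and similarly $T_Y$ is steep. The relations $T_X \perspdn Z$ and $T_Y \perspdn Z$ place $Z$ in both $\traj{T_X}$ and $\traj{T_Y}$, so Lemma~\ref{L:disj} forces $T_X = T_Y$, and therefore $1_{S_X} = 1_{S_Y} =: t$.

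Next I would show $S_X = S_Y$, which finishes the argument. The exterior swing $S_X \exswing T_X$ tells us that $0_{S_X}$ is either the leftmost or the rightmost cover of $t$, so $S_X$ is not steep (steep edges correspond to interior covers in a $\E C_1$-diagram). Opposite edges of a $4$-cell in a $\E C_1$-diagram carry the same normal orientation, so $X \perspdn S_X$ together with $X$ being normal-up (as an upper-left-boundary edge) forces $S_X$ to be normal-up; the unique normal-up cover-edge of $t$ is the leftmost one, so $S_X$ is that edge. Symmetrically $S_Y$ is also the leftmost cover-edge of $t$, giving $S_X = S_Y$. Then $X \perspdn S_X$ and $Y \perspdn S_X$ place both $X$ and $Y$ in the trajectory through $S_X$, and each is a top edge of that trajectory because an upper-left-boundary edge admits no further up-perspectivity. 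The uniqueness of the top edge (Section~\ref{S:Trajectories}) then forces $X = Y$, a contradiction.

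The main obstacle will be making precise the geometric inputs about $\E C_1$-diagrams used above: that steep edges correspond exactly to interior covers of their top element, and that opposite edges of a $4$-cell carry the same normal orientation. Both should follow from the $\E C_1$-diagram definition together with the planar layout of rectangular lattices, and once granted the argument reduces to a short application of Corollary~\ref{C:covnew} and Lemma~\ref{L:disj}.
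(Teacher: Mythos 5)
Your proposal is correct and takes essentially the same route as the paper: apply Corollary~\ref{C:covnew}(i) to both $X$ and $Y$, observe that $T_X$ and $T_Y$ are steep, and invoke Lemma~\ref{L:disj} to force $T_X=T_Y$ and hence $X=Y$. The only difference is that you spell out the final step (from $T_X=T_Y$ to $S_X=S_Y$ to $X=Y$ via the unique normal-up edge below $t$ and the uniqueness of a trajectory's top edge), which the paper's proof leaves implicit in its concluding ``and so $X=Y$.''
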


\begin{proof}
By way of contradiction, let $Z$ be an edge such that $\col Z \prec \col X, \col Y$.
Since $X$ and $Y$ are on the upper-left boundary, 
Corollary~\ref{C:covnew}(i) applies. 
Therefore, there exist normal-up edges $S_X, S_Y$ and steep edges $T_X, T_Y$ such that 
\[
   X \perspdn S_X \exswing T_X,\q  Y \perspdn S_Y \exswing T_Y,\q  
   Z \in \traj {T_X} \ii \traj {T_Y}.
\] 
By Lemma~\ref{L:disj}, the third formula implies that $T_X = T_Y$ 
and xo $X = Y$, contrary to the assumption.
\end{proof}

By Corollary~\ref{C:max1}, the set of maximal elements of $\E P$ is the same 
as the set of colors of edges in the upper boundaries.
We can partition the set of edges in the upper boundaries
into the set of edges~$\E L$ in the upper-left boundary
and the set of edges~$\E R$ in the upper-right boundary.
If $X $ and $Y$ are distinct edges in $\E L$, 
then there is no edge $Z$ of $K$ such that $\col Z \prec \col X, \col Y$
by Lemma~\ref{L:disjoint}. 
By symmetry, this verifies the Partition Property.

\section{The Maximal Cover Property}\label{S:Maximal}

Next, we verify the Maximal Cover Property for the slim rectangular lattice~$K$ 
and with a fixed $\E C_1$-diagram.

Let $x \in \E P$ be covered by a maximal element $u$ of $\E P$ in $K$. 
By Corollary~\ref{C:max1}, we can choose an edge $U$ of color $u$ on the upper boundary of $K$,
by symmetry, on the upper-left boundary of $K$. 
By Corollary~\ref{C:covnew}(ii), we can choose the edges $S, T$ in $K$ 
so that  $U  \perspdn S \exswing T$, $\col S = u$, and $\col T = x$. 
By Corollary~\ref{C:covnew}(ii), specifically, by equations \eqref{E:1} and \eqref{E:3},
we have $x \prec  u, \col{W}$ and $u \neq \col{W}$, verifying the Maximal Cover Property.

\section{The No Child Property}\label{S:Child}

In this section, we verify the No Child Property for the slim rectangular lattice~$K$ 
and with a fixed $\E C_1$-diagram.

Let $x,y,z,u \in \E P$ with $x \neq y \in \E P$,
let $u$ be a maximal element of $\E P$,
and let $x, y \prec u$ in $\E P$. 
By way of contradiction, 
let us assume that there is an element $z \in \E P$ 
such that $z \prec x,y$ in $\E P$.

By Corollary~\ref{C:max1}, the element~$u$ colors an edge~$U$ on the upper boundary of~$K$,
say, in the upper-left boundary. 
By Corollary~\ref{C:cov}(i),  for  $z \prec x \in \E P$, 
we get a peak sublattice $\seven$ 
in which the middle edge $Z$ is colored by $z$ and upper-left edge $X$
is colored by $x$, or symmetrically. 
The upper-right edge $Y$ must have color~$y$.

Now we apply Corollary~\ref{C:covnew}(ii) 
to the edge $U$ and middle edge $Z$ of the peak sublattice $\seven$,
obtaining that $U \perspdn Y \swing Z$, in particular, $U \perspdn Y$.
This is a contradiction, since $U$ is normal-up and $Y$ is normal-down.

\section{The Four-Crown Two-pendant Property}\label{S:Crown}

Finally, we verify the  Four-Crown Two-pendant Property for the slim rectangular lattice $K$ 
and with a fixed $\E C_1$-diagram.

By way of contradiction, assume that the ordered set $\E R$ of Figure~\ref{F:notation}
is a cover-preserving ordered subset of $\E P$,
where $a,b,c,d$ are maximal elements of $\E P$. 
By~Corollary~\ref{C:max1}, there are edges $A,B,C,D$ on the upper boundary of $K$, 
so that  $\col A = a$, $\col B = b$, $\col C=c$, $\col D = d$.
By left-right symmetry, we can assume that the edge $A$ 
is on the upper-left boundary of $K$. Since $p \prec a, b$ in  $\E P$,
it follows from Lemma~\ref{L:disjoint} that the edge $B$
is on the upper-right boundary of $K$, and so is $D$.
Similarly, $C$ is on the upper-left boundary of $K$. 

There are four cases, (i) $C$ is below $A$ and $B$ is below $D$;
(ii)~$C$~is below $A$ and $D$ is below $B$; and so on. 
The first two are illustrated in Figure~\ref{F:CABDx}.

\begin{figure}[htb]
\centerline{\includegraphics[scale=1.2]{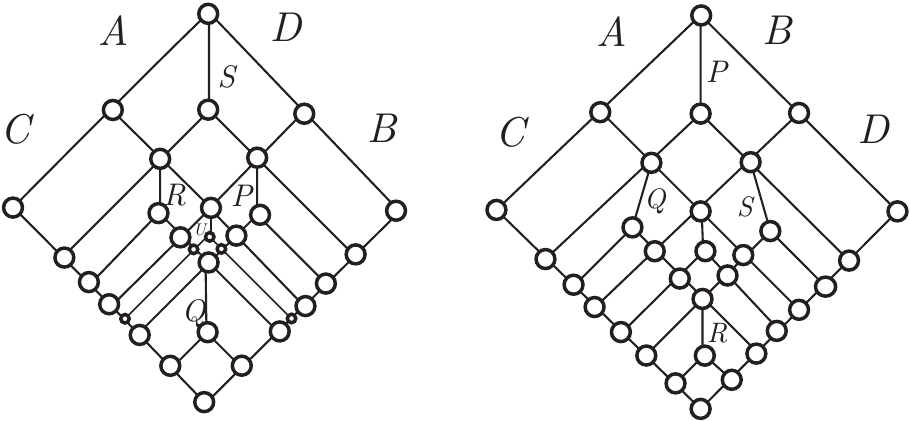}}
\caption{Illustrating the proof of The Four-Crown Two-pendant Property}
\label{F:CABDx}
\end{figure}

We consider the first case.
By Corollary~\ref{C:cov}(ii),  
there is a peak sublattice $\SfS 7$ with middle edge $P$ 
(as in the first diagram of Figure~\ref{F:CABDx})
so that $A$ and $B$ are down-perspective 
to the upper-left edge and the upper-right edge of this peak sublattice, respectively. 
We define, similarly, the edge $Q$ for $C$ and $B$,
the edge $S$ for $A$ and~$D$, the edge $R$ for $C$ and $D$,
and the edge $U$ for $R$ and $P$.

The ordered set $\E R$ is a cover-preserving subset of $\E P$,
so we get, similarly, the peak sublattice~$\SfS 7$ with middle edge $U$.
Finally, $v \prec q, s$ in $\E R$, 
therefore, there is a peak sublattice~$\SfS 7$ with middle edge $V$
with upper-left edge $V_l$ and the upper-right edge~$V_r$
so that  $S \perspdn V_l$ and $S \perspdn V_r$, or symmetrically.

This concludes the proof of the Four-Crown Two-pendant Property
and of Cz\'edli's Theorem.

Of course, the diagrams in Figure~\ref{F:CABDx} are only illustrations.
The grid could be much larger, the edges $A, C$ and $B, D$ may not be adjacent, 
and there maybe lots of other elements in $K$. 
However, our argument does not utilize the special circumstances in the diagrams.

The second case is similar, except that we get the edge $V$ 
and cannot get the edge $U$.
The third and fourth cases follow the same way.

\appendix

\section{Two more illustrations for Section~\ref{S:Crown}}\label{S:appendix}

\begin{figure}[htb]
\centerline{\includegraphics[scale=1.2]{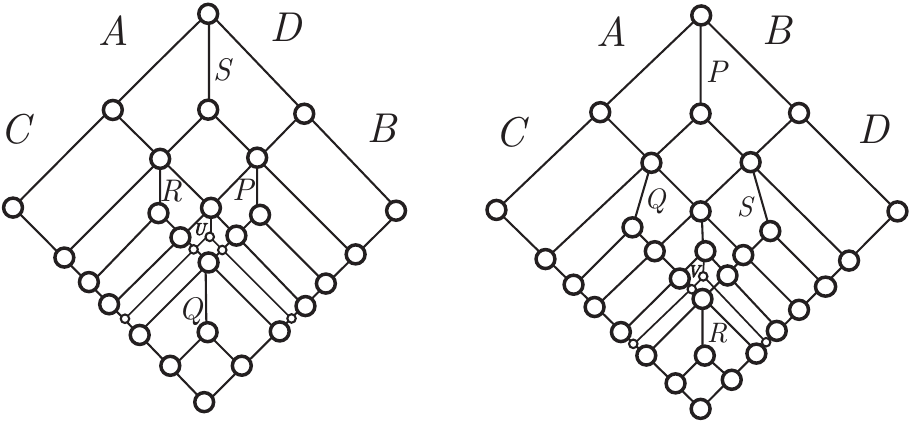}}
\caption{Two more illustrations for Section~\ref{S:Crown}}
\label{F:CABDx2}
\end{figure}

\end{document}